\subjclass[2000]{Primary 53C44, 53C42, 57R52, 35K55}
\keywords{Mean curvature flow, maps, graphs, homotopy, length decreasing, contraction}
\thanks{The first author is supported by the grant of $\text{E}\Sigma\Pi\text{A}:$ PE1-417.}
\def\real     #1{{\mathbb R^{#1}}}
\def\complex  #1{{\mathbb C^{#1}}}
\def\dt       {\partial_{t}}
\def\equationcolor {\color{black}}
\def\textcolor     {\color{black}}
\def\bcoleq    {\begin{equation}\equationcolor}
\def\ecoleq    {\textcolor\end{equation}}
\def\bcoleqn   {\equationcolor\begin{eqnarray}}
\def\ecoleqn   {\end{eqnarray}\textcolor}
\def\gm{{\operatorname{g}_M}}
\def\gn{{\operatorname{g}_N}}
\def\gk{{\operatorname{g}_{M\times N}}}
\def\rm{{\operatorname{R}_M}}
\def\rn{{\operatorname{R}_N}}
\def\rk{{\operatorname{R}_{M\times N}}}
\def\sk{{\operatorname{s}_{M\times N}}}
\def\rind{\operatorname{R}}
\def\sind{\operatorname{s}}
\def\pr{\operatorname{pr}}
\def\dF{\operatorname{d}\hspace{-3pt}F}
\def\df{\operatorname{d}\hspace{-3pt}f}
\def\gind{\operatorname{g}}
\def\sym{\operatorname{Sym}}
\DeclareMathOperator*{\Ric}{Ric}
\DeclareMathOperator*{\trace}{trace}
\DeclareMathOperator*{\rank}{rank}
\newtheorem{theorem}{Theorem}[section]
\newtheorem*{thm}{Theorem}
\newtheorem{lemma}[theorem]{Lemma}
\theoremstyle{definition}
\newtheorem{remark}[theorem]{Remark}
\newcommand{\bfig}{\begin{figure}}
\newcommand{\efig}{\end{figure}}
\def\pproof#1{\@ifnextchar[\opargproof
{\opargproof[\it Proof of #1.]}}
\def\opargproof[#1]{\par\noindent {\bf #1 }}
\numberwithin{equation}{section}
\begin{document}

\title[Mean Curvature Flow]{Evolution of contractions by mean curvature flow}
\author[Andreas Savas-Halilaj]{\textsc{Andreas Savas-Halilaj}}
\author[Knut Smoczyk]{\textsc{Knut Smoczyk}}
\address{Andreas Savas-Halilaj\newline
Institut f\"ur Differentialgeometrie\newline
Leibniz Universit\"at Hannover\newline
Welfengarten 1\newline
30167 Hannover, Germany\newline
{\sl E-mail address:} {\bf savasha@math.uni-hannover.de}
}
\address{Knut Smoczyk\newline
Institut f\"ur Differentialgeometrie\newline
and Riemann Center for Geometry and Physics\newline
Leibniz Universit\"at Hannover\newline
Welfengarten 1\newline
30167 Hannover, Germany\newline
{\sl E-mail address:} {\bf smoczyk@math.uni-hannover.de}
}

\date{}

\begin{abstract}
We investigate length decreasing maps $f:M\to N$ between Riemannian manifolds $M$, $N$
of dimensions $m\ge 2$ and $n$, respectively. Assuming that $M$ is compact and $N$ is complete such
that
$$\sec_M>-\sigma\quad\text{and}\quad{\Ric}_M\ge(m-1)\sigma\ge(m-1)\sec_N\ge-\mu,$$
where $\sigma$, $\mu$ are positive constants, we show that the mean curvature flow provides a smooth
homotopy of $f$ into a constant map.
\end{abstract}

\maketitle
%\setcounter{tocdepth}{1}
%\tableofcontents
%%%%%%%%%%%%%%%%%%%%%%%%%%%%%%%%%%%%%%%%%%%%%%%%%%%%%%%%%%%%%%%%%%%%%%%%%

\section{Introduction}

Let $f:M\to N$ be a smooth map between Riemannian manifolds. To any such $f$ we assign a geometric quantity called
$k$-{\it dilation}, which  measures how much the map stretches $k$-dimensional volumes. For example the $1$-dilation
coincides with the Lipschitz constant of the map. The map $f$ is called a {\it contraction} or
\textit{weakly length decreasing} if its $1$-dilation is less or equal to $1$. Equivalently,  the map $f$ is a contraction if
$f^{\ast}\gn\le\gm$, where $\gm,\gn$ stand for the Riemannian metrics of $M$ and $N$, respectively.
 In particular, the map $f$ will be called \textit{strictly length decreasing}
if $f^{\ast}\gn<\gm$ everywhere and an \textit{isometry} if $f^{\ast}\gn\equiv\gm$.

If $M=\mathbb{S}^m$ and $N=\mathbb{S}^n$ are unit spheres and $f:\mathbb{S}^m\to\mathbb{S}^n$ is
a strictly length decreasing map, then the diameter of $f(\mathbb{S}^m)$ is strictly less than $\pi$ which implies that
the map $f$ is not surjective. Hence, $f$ must be null-homotopic. Tsui and Wang \cite{tsui} proved that maps $f:\mathbb{S}^m\to\mathbb{S}^n$ between unit spheres
with $2$-dilation strictly less than $1$, or equivalently {\it strictly area decreasing}, are also homotopic to a
constant map. As it was shown by Guth \cite{guth,guth1} this result cannot be extended in the case of $k$-dilation
for $k\ge 3$.

Based on ideas developed in \cite{wang,tsui}, Lee and Lee \cite{lee} proved that any
strictly area decreasing map between compact Riemannian manifolds $M$ and $N$  whose sectional curvatures are bounded by $\sec_{M}\ge\sigma\ge\sec_{N}$
for some positive number $\sigma>0$, is homotopic by mean curvature flow to a constant map. We would like to point out
here that
the curvature assumptions can be relaxed even much further as it was shown in \cite{savas1}.
The goal of this short paper is to show that in the length decreasing case one can drop the
compactness assumption on $N$. More precisely we prove:

\begin{thm}\label{thmA}
Let $M$ and $N$ be two Riemannian manifolds with $M$ being compact and $N$ complete. Assume that $m=\dim M\ge 2$
and that there exist positive constants $\sigma$, $\mu$ such that the sectional curvatures $\sec_{M}$
of $M$ and $\sec_{N}$ of $N$ and the Ricci curvature ${\Ric}_M$ of $M$ satisfy
\begin{equation*}\label{curvcond3}
\sec_{M}>-\sigma\quad\text{and}\quad{\Ric}_M\ge(m-1)\sigma\ge(m-1)\sec_{N}\ge-\mu.
\end{equation*}
Let $f:M\to N$ be a strictly length decreasing map. Then the mean curvature flow of the graph
of $f$ remains the graph of a strictly length decreasing map, exists for all time and $f$ converges to a
constant map.
\end{thm}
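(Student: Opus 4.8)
The plan is to study the mean curvature flow of the graph $\Gamma_f\subset M\times N$ inside the product manifold equipped with the product metric $\gk=\gm\oplus\gn$, and to show that the graphical and length-decreasing conditions are preserved, that the flow exists for all time, and that it converges to a slice $M\times\{q\}$. Since $M$ is compact and the initial submanifold is a graph, short-time existence of the mean curvature flow $F_t:M\to M\times N$ is standard, and the flowed submanifold stays a graph over $M$ as long as we can control the ``area-type'' quantity measuring the tilt of the tangent planes against the vertical distribution. The natural device is to pull back the two metrics $\gm$ and $f_t^{\ast}\gn$ to $M$ via the graphical parametrization and compare their symmetric product; equivalently one works with the symmetric $2$-tensor $\smin^{\ast}\gn-\smin^{\ast}\gm$ restricted to the moving graph, or with the functions built from its eigenvalues (the singular values of $df_t$). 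The strictly length decreasing condition says all singular values $\lambda_i$ of $df$ satisfy $\lambda_i<1$; the graphical condition is automatically implied.

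The core of the argument is a parabolic maximum principle applied to an auxiliary function of the singular values. Concretely, I would introduce a function such as $\Phi=\mathrm{trace}\,(\mathrm{pr}\circ(\gm-f_t^{\ast}\gn))$ or, more robustly, a quantity of the form $\phi=\sum_i \frac{1}{1-\lambda_i^2}$ (or $\max_i \lambda_i^2$, handled via the standard trick of passing to an eigenvalue with a smooth symmetric-function surrogate), and compute its evolution equation along the flow. Using the Gauss equation for $\Gamma_{f_t}\subset M\times N$, Codazzi, and Simons-type identities, the evolution of $\phi$ takes the form $(\partial_t-\Delta)\phi = (\text{negative gradient/Hessian terms}) + (\text{curvature terms})$, where the curvature terms combine $\sec_M$, $\sec_N$ and $\mathrm{Ric}_M$ evaluated on frames adapted to the singular-value decomposition. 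This is exactly where the hypotheses $\sec_M>-\sigma$, $\mathrm{Ric}_M\ge(m-1)\sigma\ge(m-1)\sec_N\ge-\mu$ are designed to make the curvature contribution have the right sign: one checks that on the relevant two-planes the combination is bounded above by a term that is nonpositive once the $\lambda_i$ are $\le 1$, with a strict sign when some $\lambda_i$ is close to $1$. Hence $\sup_M \lambda_i^2$ is strictly decreasing, giving a uniform bound $\lambda_i\le 1-\delta(t)<1$ and preservation of strict length decrease; in particular the second fundamental form of the graph is controlled by the gradient of $f_t$ in the usual graphical way, so no finite-time singularity forms and long-time existence follows from standard estimates (White's regularity theory / the graphical curvature estimates, using compactness of $M$ and the one-sided bound $\sec_N\ge -\mu/(m-1)$ to keep the image in a region of bounded geometry).

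For convergence, once long-time existence and a uniform bound $f_t^{\ast}\gn\le(1-\delta)\gm$ are in hand, I would argue that the mean curvature $H$ of the graph tends to zero and that the diameter of $f_t(M)$ decays. The length-decreasing bound forces $\mathrm{diam}(f_t(M))\le \sqrt{1-\delta}\cdot\mathrm{diam}(M)$ to stay bounded, and the strict inequality in the curvature-driven monotonicity, together with integral estimates for $\int_M |H|^2$ in time, yields that the graph subconverges to a minimal graph which, being length-decreasing over a compact $M$ with $\mathrm{Ric}_M>0$ (forced by $\mathrm{Ric}_M\ge(m-1)\sigma>0$), must be totally geodesic and in fact a horizontal slice $M\times\{q\}$ — equivalently $f_t\to\text{const}$. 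Smooth convergence of the whole flow (not just subconvergence) then follows from a Lojasiewicz–Simon argument or, more elementarily in this contracting situation, from exponential decay of $\sup_M\lambda_i$ once it enters a region where the curvature term dominates linearly.

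The main obstacle I expect is the curvature term in the evolution inequality: getting the algebraic inequality on the singular-value frame to close requires carefully choosing the test function so that the ``bad'' terms coming from $\sec_M>-\sigma$ (a lower bound, hence contributing with the unfavorable sign when it appears as $-\sec_M$) are dominated by the ``good'' terms coming from $\mathrm{Ric}_M\ge(m-1)\sigma$ and $\sigma\ge\sec_N$; this is precisely the point where the specific combination $\mathrm{Ric}_M\ge(m-1)\sigma\ge(m-1)\sec_N$ is forced upon us, and verifying it demands handling the case of coincident or nearly-coincident singular values (where the naive largest-eigenvalue function is not smooth) by the usual device of testing against $\sum_{i}\eta(\lambda_i^2)$ for a suitable convex increasing $\eta$, or by the concavity/symmetry arguments of Tsui–Wang and Lee–Lee adapted to the noncompact target. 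A secondary technical point is ensuring the flow stays in a region of $N$ with uniformly bounded geometry despite $N$ being merely complete, which the hypothesis $\sec_N\ge -\mu/(m-1)$ together with the a priori diameter bound on $f_t(M)$ takes care of.
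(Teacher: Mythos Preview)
Your outline correctly identifies that the strictly length-decreasing property is preserved by a tensor maximum principle under the stated curvature hypotheses (this step is identical to the compact-target case), but there is a genuine gap in the passage to long-time existence when $N$ is merely complete. You assert that the second fundamental form is controlled ``in the usual graphical way'' once the singular values are bounded; this is not available in higher codimension --- the Ecker--Huisken interior estimate is a codimension-one phenomenon, and for graphs of maps one must instead run Wang's blow-up argument together with White's local regularity theorem. That route requires the evolving graphs to stay inside a \emph{compact} region $W\subset M\times N$ on each finite interval $[0,T)$, so that $W$ can be isometrically embedded in some $\mathbb{R}^p$ with uniformly controlled geometry. Your proposed substitute --- the a priori bound $\operatorname{diam}\bigl(f_t(M)\bigr)\le\operatorname{diam}(M)$ coming from the length-decreasing condition --- only says that $f_t(M)$ lies in a ball of fixed radius at each \emph{fixed} time; it does not prevent the center $f_t(x_0)$ from drifting to infinity in $N$ as $t\uparrow T$, since that drift is governed by $\partial_t F=H$ and you have no bound on $H$. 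Two-sided sectional curvature bounds on $N$ alone do not supply the needed injectivity-radius control either.

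The missing ingredient, and the paper's main new contribution over the compact-target result, is precisely a uniform estimate $\|H\|^2\le C$ on $M\times[0,T)$. It is obtained by a maximum principle on the normal bundle: one forms the symmetric $2$-tensor $P:=\kappa\,(H\otimes H)+\sind^{\perp}$, where $\sind^{\perp}$ is the restriction of $\pi_M^*\gm-\pi_N^*\gn$ to $\mathcal{N}M$ (which satisfies $\sind^{\perp}\le-\varepsilon\,\gind^{\perp}$ as long as the length-decreasing bound persists), computes $(\nabla^{\perp}_{\partial_t}-\Delta^{\perp})P$, and shows that for $\kappa$ sufficiently small the reaction term is strictly negative at any first null eigenvector of $P$. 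The resulting mean-curvature bound gives $\operatorname{dist}\bigl(F(x,0),F(x,t)\bigr)\le\int_0^t\|H\|\,ds\le Ct$, hence compact containment on finite intervals, after which the Wang/White argument and the convergence analysis proceed exactly as in the compact case. Without this $|H|$-estimate (or some equivalent $C^0$-control on the motion of the graph in $N$), your argument does not close.
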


In the case where $N$ is compact the above result is contained in our previous paper \cite{savas1}. The key argument to
remove the compactness is an estimate on the mean curvature vector field of the evolving graphs.
In particular, we prove that the norm of the mean curvature vector field remains uniformly bounded in time.
This will imply that the evolving graphs stay in compact regions of $M\times N$
on time intervals $[0,T)$, with $T<\infty$. Using this estimate, the blow-up analysis of Wang \cite{wang}
and White's regularity theorem \cite{white} we are able to prove that the maximal time $T$ of existence of the
flow is $\infty$. To prove the mean curvature estimate, we introduce a tensor on the normal bundle of the
evolving graphs and compare the maximum of the norm of the mean curvature with the biggest eigenvalue of
this tensor.

\section{Graphs}
\subsection{Basic facts}
We follow here the notations of our previous two papers \cite{savas1,savas}. The product manifold $M\times N$
will always be regarded as a Riemannian manifold equipped with the metric
$$\gk=\langle\cdot\,,\cdot\rangle:=\gm\times \gn.$$
The \textit{graph} of a map $f:M\to N$ is defined to be the submanifold
$$\Gamma(f):=\{(x,f(x))\in M\times N:x\in M\}$$
of $M\times N$. The graph $\Gamma(f)$ can be parametrized via the embedding $F:M\to M\times N$,
$F:=I_{M}\times f$, where $I_{M}$ is the identity map of $M$.

The Riemannian metric induced by $F$ on $M$ will be denoted by
$$\gind:=F^*\gk.$$
The two natural projections $\pi_{M}:M\times N\to M$ and $\pi_{N}:M\times N\to N$
are submersions, that is they are smooth and have maximal rank. The tangent bundle
of the product manifold $M\times N$, splits as a direct sum
\begin{equation*}
T(M\times N)=TM\oplus TN.
\end{equation*}
The four metric tensors $\gm,\gn,\gk$ and $\gind$ are related by
\begin{eqnarray*}
\gk&=&\pi_M^*\gm+\pi_N^*\gn\,,\label{met1}\\
\gind&=&F^*\gk=\gm+f^*\gn\,.\label{met2}
\end{eqnarray*}
As in \cite{savas1,savas}, define the symmetric $2$-tensors
\begin{eqnarray*}
\sk&:=&\pi_M^*\gm-\pi_{N}^*\gn\,,\label{met3}\\
\sind&:=&F^*\sk=\gm-f^*\gn\,.\label{met4}
\end{eqnarray*}
The Levi-Civita connection $\nabla^{\gk}$ associated to $\gk$ is
related to the Levi-Civita connections $\nabla^{\gm}$ on $(M,\gm)$ and $\nabla^{\gn}$ on
$(N,\gn)$ by
$$\nabla^{\gk}=\pi_M^*\nabla^{\gm}\oplus\pi_N^*\nabla^{\gn}\,.$$
The corresponding curvature operator $\rk$ on $M\times N$ with respect to the metric
$\gk$ is related to the curvature
operators $\rm$ on $(M,\gm)$ and $\rn$ on $(N,\gn)$ by
\begin{equation*}
\rk=\pi^{*}_{M}\rm\oplus\pi^{*}_{N}\rn.
\end{equation*}
The Levi-Civita connection on $M$ with respect to the induced metric
$\gind$ is denoted by $\nabla$, the curvature tensor by $\rind$ and the Ricci curvature by $\Ric$.

\subsection{The second fundamental form}
The differential $\dF$ of $F$ can be regarded as a section in the induced bundle  $F^*T(M\times N)\otimes T^*M$.
In the sequel we will denote all full connections on bundles over $M$ that are induced by
the Levi-Civita connection on $M\times N$ via the immersion $F:M\to M\times N$ by the same letter $\nabla$.
The covariant derivative of
$\dF$ is called the \textit{second fundamental form} of the immersion $F$
and it will be denoted by $A$. That is
$$A(v,w):=(\nabla\hspace{-2pt}\dF)(v,w),$$
for any vector fields $v,w\in TM$. If $\xi$ is a normal vector of the
graph, then the symmetric tensor $A_{\xi}$ given by
$$A_{\xi}(v,w):=\langle A(v,w),\xi\rangle$$
is called the \textit{second fundamental form with respect to the direction $\xi$}.

The trace of $A$ with respect to the metric $\gind$ is called the \textit{mean curvature
vector field} of $\Gamma(f)$ and it will be denoted by
$$H:={\trace}_{\gind}A.$$
Note that $H$ is a section in the normal bundle $\mathcal{N}M$. The graph $\Gamma(f)$ is
called \textit{minimal} if $H$ vanishes identically.

Every vector $V$ of $F^{\ast}T(M\times N)$ can be decomposed as
$$V=V^{\top}+V^{\perp},$$
where $V^{\top}$ stands for the {\it tangential component} and $V^{\perp}$ for the {\it normal
component} of $V$ along $F$. Introduce now the natural projection map
$\pr:F^*T(M\times N)\to\mathcal{N}M$, $\pr(V):=V^{\perp}$. We can express this map locally as
$$\pr(V)=V-\sum_{k,l=1}^m\gind^{kl}\langle V,\dF(\partial_k)\rangle\dF(\partial_l),$$
where $\{\partial_1,\dots,\partial_m\}$ is the basis of a local coordinate chart defined on an open
neighborhood of the manifold $M$ and $\gind^{kl}$ are the components of the inverse matrix $(\gind_{kl})^{-1}$,
where $\gind_{kl}=\gind(\partial_k,\partial_l)$, $1\le k,l \le m$.
The connection of the normal bundle will be denoted by the letter $\nabla^{\perp}$ and is defined by
$$\nabla^{\perp}_v\xi:=\pr\big(\nabla_v\xi\big),$$
where here $v\in TM$ and $\xi\in\mathcal{N}M$. The Laplacian with respect to $\nabla^{\perp}$ will be denoted by
$\Delta^{\perp}$.

By \textit{Gau\ss' equation} the tensors $\rind$ and $\rk$ are related by the formula
\begin{eqnarray*}
&&\big(\rind-F^*\rk\big)(v_1,w_1,v_2,w_2)\\
&&\quad\,\,\,\,=\big\langle A(v_1,v_2),A(w_1,w_2)\big\rangle-\big\langle A(v_1,w_2),A(w_1,v_2)\big\rangle,\nonumber\label{gauss}
\end{eqnarray*}
and the second fundamental form satisfies
the
\textit{Codazzi equation}
\begin{eqnarray*}
&&(\nabla_uA)(v,w)-(\nabla_vA)(u,w) \\
&&\quad\quad\quad\quad\quad\,\,\,\,=\rk\bigl(\dF(u),\dF(v),\dF(w)\bigr)-\dF\bigl(\rind(u,v,w)\bigr)\nonumber,\label{codazzi}
\end{eqnarray*}
for any $u,v,w, v_1,v_2,w_1,w_2\in TM$.

\subsection{Singular decomposition}\label{frames}
As in \cite{savas1,savas}, fix a point $x\in M$ and let
$\lambda^2_{1}\le\cdots\le\lambda^2_{m}$
be the eigenvalues at $x$ of $f^{*}\gn$ with respect to $\gm$. The corresponding values $\lambda_i\ge 0$,
$i\in\{1,\dots,m\}$, are called
\textit{singular values} of the differential $\df$ of $f$ at the point $x$. It turns out that the singular values
depend continuously on $x$. Set
$r:=\rank\df(x).$
Obviously, $r\le\min\{m,n\}$ and
$$\lambda_{1}=\cdots=\lambda_{m-r}=0.$$
At the point $x$ consider an orthonormal basis
$$\{\alpha_{1},\dots,\alpha_{m-r};\alpha_{m-r+1},
\dots,\alpha_{m}\}$$
with respect to $\gm$ which diagonalizes $f^*\gn$. Furthermore, at the point
$f(x)$ consider an orthonormal basis
$$\{\beta_{1},\dots,\beta_{n-r};\beta_{n-r+1},\dots,\beta_{n}\}$$
with respect to $\gn$ such that
$$\df(\alpha_{i})=\lambda_{i}\beta_{n-m+i},$$
for any $i\in\{m-r+1,\dots,m\}$.
Then one may define a special basis for the tangent and the normal space of the graph
in terms of the singular values. The vectors
\begin{equation*}
e_{i}:=\left\{
\begin{array}{ll}
\alpha _{i}&  , 1\le i\le m-r,\\
&  \\
\frac{1}{\sqrt{1+\lambda _{i}^{2}}}\left( \alpha
_{i}\oplus \lambda _{i}\beta _{n-m+i}\right)  &, m-r+1\leq
i\leq m,
\end{array}
\right.\label{tangent}
\end{equation*}
form an orthonormal basis with respect to the metric $\gk$ of the tangent space
$\dF\hspace{-2pt}\left(T_{x}M\right)$ of the graph $\Gamma(f)$ at
$x$.

The vectors
\begin{equation*}
\xi_{i}:=\left\{
\begin{array}{ll}
\beta _{i} & , 1\leq i\leq n-r,\\
&  \\
\frac{1}{\sqrt{1+\lambda _{i+m-n}^{2}}}\left( -\lambda
_{i+m-n}\alpha _{i+m-n}\oplus \beta _{i}\right) &, n-r+1\leq i\leq n, \\
\end{array}
\right.\label{normal}
\end{equation*}
give an orthonormal basis with respect to  $\gk$ of the normal space $\mathcal{N}_{x}M$ of the
graph $\Gamma(f)$ at the point $F(x)$. Note that
\begin{equation}\label{sind}
\sk(e_{i},e_{j})=\frac{1-\lambda^{2}_{i}}{1+\lambda^{2}_{i}}\delta_{ij},\quad 1\le i,j\le m.
\end{equation}
Consequently, the map $f$ is strictly length decreasing if and only if the symmetric $2$-tensor $\sind$ is positive.

Denote by $\sind^{\perp}$ the restriction of $\sk$ to the normal bundle of the graph. Then, we can readily check
that
\begin{eqnarray}
\hspace{-.5cm}
\sind^{\perp}(\xi_{i},\xi_{j})
&=&\begin{cases}
\displaystyle
-\delta_{ij}&\,, 1\le i\le n-r,\\[4pt]\displaystyle
-\frac{1-\lambda^{2}_{i+m-n}}{1+\lambda^{2}_{i+m-n}}\delta_{ij}&\,, n-r+1\le i\le n.
\end{cases}\label{normal2}
\end{eqnarray}
Hence, if there exists a positive constant $\varepsilon$ such that $\sind\ge\varepsilon\gind$, then
$$\sind^{\perp}\le-\varepsilon\gind^{\perp},$$
where $\gind^{\perp}$ stands for the restriction of
$\sk$ on $\mathcal{N}M$.
Furthermore,
\begin{equation}
\sk(e_{m-r+i},\xi_{n-r+j})=-\frac{2\lambda_{m-r+i}}{1+\lambda^{2}_{m-r+i}}\delta_{ij},
\quad 1\le i,j\le r.\label{mixed}
\end{equation}
Moreover, the value of $\sk$ on any other mixed term is zero.

\section{Evolution equations}
Let $M$ and $N$ be Riemannian manifolds, $f:M\to N$ a smooth map and $F:M\to M\times N$,
$F:=I_M\times f$, the parametrization of the graph $\Gamma(f)$ of $f$.
Deform the submanifold $\Gamma(f)$ by mean curvature flow in the product Riemannian manifold $M\times N$.
By this process we get a family of immersions
$F_t:M\to M\times N$, $t\in [0,T)$,
with initial condition $F_0=F$, where $0<T\le\infty$ shall denote the maximal time of existence.
From the compactness of $M$ it follows that the evolving submanifold stays a graph on an interval $[0,T_g)$
with $T_g\le T$, that is there exists a family of diffeomorphisms $\phi_{t}:M\to M$ and a family of maps
$f_t:M\to N$ such that
$F_t\circ\phi_t=I_M\times f_t,$
for any $t\in [0,T_g)$. In the matter of fact, under the assumptions of the Theorem, the singular values of $f$ remain
uniformly bounded in time and the solution of the mean curvature flow stays a graph as long as the flow exists.
This result follows from the next lemma, which still holds in the case where $N$ is complete.

\begin{lemma}[\cite{savas1}]\label{length}
Let $M$ and $N$ be Riemannian manifolds with $M$ being compact and $N$ complete. Assume that $m=\dim M\ge 2$
and that there exists a positive constant $\sigma$ such that the sectional curvatures $\sec_{M}$
of $M$ and $\sec_{N}$ of $N$ and the Ricci curvature ${\Ric}_M$ of $M$ satisfy
\begin{equation*}\label{curvcond3}
\sec_{M}>-\sigma\quad\text{and}\quad{\Ric}_M\ge(m-1)\sigma\ge(m-1)\sec_{N}.
\end{equation*}
Let $f:M\to N$ be a strictly length decreasing map such that $\sind\ge\varepsilon\gind$, where $\varepsilon$
is a positive constant. Then the inequality $\sind\ge\varepsilon\gind$ is preserved under the mean curvature flow. Furthermore,
$T_g=T$.
\end{lemma}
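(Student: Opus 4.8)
The plan is to flow $\Gamma(f)$ by mean curvature --- which exists for a short time since $M$ is compact and $M\times N$ is complete --- and to apply the parabolic maximum principle for symmetric $2$-tensors (Hamilton) to the pull-back $\sind=F_t^{*}\sk$; this is precisely the proof of \cite{savas1}, and the only thing one has to notice is that it never uses compactness of $N$. Indeed, on the maximal interval $[0,T_g)$ on which the solution stays a graph every $\Gamma(f_t)$ \emph{is} a compact submanifold of $M\times N$, so all the maximum-principle arguments take place over compact pieces of the ambient space and completeness of $N$ is all that is needed. We may assume $f$ is non-constant, so $0<\varepsilon<1$.

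First I would derive the evolution equation of $\sind$. Since $\nabla^{\gk}=\pi_M^{*}\nabla^{\gm}\oplus\pi_N^{*}\nabla^{\gn}$, the tensor $\sk=\pi_M^{*}\gm-\pi_N^{*}\gn$ is parallel on $(M\times N,\gk)$; using $\partial_tF_t=H$, the Weingarten relation and the \emph{normal} component of the Codazzi equation --- which, because the differential of $F$ maps tangent vectors to tangent vectors, involves only the ambient curvature $\rk=\pi_M^{*}\rm\oplus\pi_N^{*}\rn$ and not the intrinsic curvature --- the first-derivative terms of the second fundamental form $A$ cancel and one obtains an identity
\begin{equation*}
\Big(\tfrac{\partial}{\partial t}-\Delta\Big)\sind_{ij}=\mathcal{Q}_{ij}+\mathcal{R}_{ij},
\end{equation*}
with $\mathcal{Q}_{ij}$ quadratic in $A$ (formed from $A$, $\sk$, $\sind$, $\gind$) and $\mathcal{R}_{ij}$ linear in $\rk$. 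Together with the metric evolution $\big(\tfrac{\partial}{\partial t}-\Delta\big)\gind_{ij}=-2\langle H,A_{ij}\rangle$, the tensor $\Phi:=\sind-\varepsilon\gind$ then obeys
\begin{equation*}
\Big(\tfrac{\partial}{\partial t}-\Delta\Big)\Phi_{ij}=\mathcal{Q}_{ij}+2\varepsilon\langle H,A_{ij}\rangle+\mathcal{R}_{ij}.
\end{equation*}

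Then, to apply the maximum principle I would verify that whenever $\Phi\ge0$ and $v$ is a null eigenvector of $\Phi$, the right-hand side above, evaluated on $v$, is $\ge0$. Using the orthonormal frames $\{e_i\}$ of $TM$ and $\{\xi_\alpha\}$ of $\mathcal{N}M$ from Subsection~\ref{frames}, which by \eqref{sind}, \eqref{normal2} simultaneously diagonalise $\gind$, $\sind$ and $\gind^{\perp}$, $\sind^{\perp}$, the vector $v$ is some $e_j$ attaining the largest singular value, so $\lambda_j^{2}=\tfrac{1-\varepsilon}{1+\varepsilon}$ and $\lambda_i^{2}\le\lambda_j^{2}$ for all $i$. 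In this frame the $H$-linear part of $\mathcal{Q}_{jj}$ equals $-2\varepsilon\langle H,A_{jj}\rangle$ and cancels the term $2\varepsilon\langle H,A_{jj}\rangle$ from the metric, so that (using also \eqref{mixed})
\begin{equation*}
\Big(\tfrac{\partial}{\partial t}-\Delta\Big)\Phi_{jj}=2\sum_k\big(\varepsilon\,|A_{kj}|^{2}-\sind^{\perp}(A_{kj},A_{kj})\big)+\mathcal{R}_{jj},
\end{equation*}
and, writing $w_k:=(1+\lambda_k^{2})^{-1}\in[\tfrac{1+\varepsilon}{2},1]$, the curvature part comes out as $\mathcal{R}_{jj}=(1-\varepsilon^{2})\sum_{k\ne j}\big[w_k(\sec_M(\alpha_k,\alpha_j)+\sigma)+(1-w_k)(\sigma-\sec_N(\beta_{n-m+k},\beta_{n-m+j}))-\sigma\big]$. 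The $A$-terms are $\ge0$ since $f$ strictly length decreasing gives $\sind^{\perp}\le-\varepsilon\,\gind^{\perp}<0$; the first two summands of $\mathcal{R}_{jj}$ are $\ge0$ because $\sec_M>-\sigma$ and $\sec_N\le\sigma$; and since $w_k\ge\tfrac{1+\varepsilon}{2}$ and $\sum_{k\ne j}(\sec_M(\alpha_k,\alpha_j)+\sigma)=\Ric_M(\alpha_j,\alpha_j)+(m-1)\sigma\ge2(m-1)\sigma$, summing gives $\mathcal{R}_{jj}\ge(1-\varepsilon^{2})\big((1+\varepsilon)(m-1)\sigma-(m-1)\sigma\big)=\varepsilon(1-\varepsilon^{2})(m-1)\sigma\ge0$. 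Hence the reaction term is non-negative, the maximum principle applies, and $\sind\ge\varepsilon\gind$ is preserved on $[0,T_g)$. This last sign estimate --- balancing the weighted sectional curvatures of $M$ against the Ricci lower bound, which is exactly the step that uses $\sec_M>-\sigma$ --- is the main obstacle; the rest is bookkeeping.

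It remains to see that $T_g=T$. By \eqref{sind}, the bound $\sind\ge\varepsilon\gind$ is equivalent to $\lambda_i(x,t)^{2}\le\tfrac{1-\varepsilon}{1+\varepsilon}<1$ for all $i$, all $x\in M$ and all $t<T_g$, so the tangent spaces of $\Gamma(f_t)$ stay uniformly transverse to the fibres $\{x\}\times N$ and this transversality does not degenerate as $t\uparrow T_g$. Consequently $\pi_M\circ F_t\colon M\to M$ is a local diffeomorphism of the compact manifold $M$, equal to $I_M$ at $t=0$ and hence of degree one, hence a diffeomorphism, for every $t<T_g$; so if $T_g<T$ the graph property would extend beyond $T_g$, contradicting maximality. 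Therefore $T_g=T$.
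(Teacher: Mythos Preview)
Your proposal is correct and matches the paper's treatment: the paper gives no proof of this lemma but simply cites \cite{savas1}, remarking that the argument there goes through verbatim when $N$ is merely complete, since the maximum principle is applied over the compact manifold $M$. Your reconstruction --- the evolution equation of $\sind-\varepsilon\gind$, the sign analysis of the $A$-terms via $\sind^{\perp}\le-\varepsilon\gind^{\perp}$, and the curvature estimate balancing $\sec_M>-\sigma$ against $\Ric_M\ge(m-1)\sigma\ge(m-1)\sec_N$ --- is exactly the proof of \cite{savas1}, and your opening observation that compactness of $N$ is nowhere used is precisely the point the paper makes.
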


Now we claim that the norm of the mean curvature vector remains bounded in time. Inspired on ideas developed for
the Lagrangian mean curvature flow in \cite{smoczyk} (see also \cite{chau1} for the Lagrangian mean curvature flow
of non-compact euclidean domains in $\complex{m}$) we will compare the eigenvalues of the symmetric tensor
$H\otimes H$ with the biggest eigenvalue of
$-\sind^{\perp}$.

\begin{lemma}\label{projection}
Let $\xi$ be a local vector field along the graph of $f_{t_0}$ which is normal to $\Gamma(f_{t_0})$ at
a point $x_0$. The time derivative of $\pr$ at $\xi\in\mathcal{N}_{x_0}M$, when it is
regarded as a bundle map $\pr:F^{\ast}T(M\times N)\to F^{\ast}T(M\times N)$,
is given by
\begin{equation*}
(\nabla_{\dt}{\pr})(\xi)=-\sum_{j=1}^m\big\langle\xi,\nabla_{e_j}H\big\rangle e_j
=-\sum_{j=1}^m\big\langle\xi,\nabla^{\perp}_{e_j}H\big\rangle e_j,
\end{equation*}
where $\{e_1,\dots,e_m\}$ a local orthonormal frame in the tangent bundle of the graph.
Moreover, the time derivative of the natural projection at $\xi\in\mathcal{N}_{x_0}M$ ,
when it is
regarded as a map $\pr:F^{\ast}T(M\times N)\to\mathcal{N}M$, is zero. That is, $(\nabla^{\perp}_{\dt}\pr)(\xi)=0$.
\end{lemma}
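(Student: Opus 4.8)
The plan is to avoid any geometric trickery and simply differentiate the closed-form expression for $\pr$. The point is that $\nabla_{\dt}\pr$ is a genuine tensor — a section of $\Hom\bigl(F^*T(M\times N),F^*T(M\times N)\bigr)$ over $M\times[0,T)$ — so its value on $\xi\in\mathcal N_{x_0}M$ is independent of how one might extend $\xi$ in time, and it may be computed directly from
$$\pr=\Id-\sum_{k,l=1}^m\gind^{kl}\,\dF(\partial_l)\otimes\dF(\partial_k)^{\flat},\qquad \dF(\partial_k)^{\flat}:=\langle\dF(\partial_k),\cdot\,\rangle,$$
with $\{\partial_1,\dots,\partial_m\}$ a coordinate frame near $x_0$. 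The only facts about the flow that enter are: the evolution equation $\dt F=H$, which via the standard commutation rule yields $\nabla_{\dt}\dF(\partial_k)=\nabla_{\partial_k}H$; and parallelism of $\langle\cdot,\cdot\rangle$, which gives $\nabla_{\dt}\bigl(\dF(\partial_k)^{\flat}\bigr)=(\nabla_{\partial_k}H)^{\flat}$ together with an expression for $\dt\gind^{kl}$ in terms of $A_H$ whose precise form will turn out to be irrelevant.

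Carrying out the differentiation, the Leibniz rule produces three groups of terms: those containing $\dt\gind^{kl}$, those containing $\nabla_{\dt}\dF(\partial_l)$, and those containing $\nabla_{\dt}\bigl(\dF(\partial_k)^{\flat}\bigr)$. When evaluated on $\xi$, the first two groups are multiplied by $\dF(\partial_k)^{\flat}(\xi)=\langle\dF(\partial_k),\xi\rangle$, which vanishes at $x_0$ because $\xi$ is normal there; so only the third group survives and yields
$$(\nabla_{\dt}\pr)(\xi)=-\sum_{k,l=1}^m\gind^{kl}\langle\nabla_{\partial_k}H,\xi\rangle\,\dF(\partial_l).$$
Since for an orthonormal tangent frame $\{e_1,\dots,e_m\}$ of $\Gamma(f_{t_0})$ one has $\sum_{k,l}\gind^{kl}\langle W,\dF(\partial_k)\rangle\dF(\partial_l)=\sum_j\langle W,e_j\rangle e_j$ for every $W$ — applied here with $W=\nabla_{\partial_k}H$ after expanding each $e_j$ in the coordinate frame — this becomes $-\sum_{j}\langle\nabla_{e_j}H,\xi\rangle e_j$, and replacing $\nabla_{e_j}H$ by $\nabla^{\perp}_{e_j}H$ is legitimate because $\xi$ is normal. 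That is the asserted two-fold formula.

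For the final assertion I would observe that the vector just obtained is \emph{tangential} to $\Gamma(f_{t_0})$. When $\pr$ is regarded as a map into $\mathcal N M$, its time derivative is taken with the normal connection $\nabla^{\perp}$ on the target; choosing the extension $\nabla_{\dt}\xi=0$ gives $(\nabla^{\perp}_{\dt}\pr)(\xi)=\nabla^{\perp}_{\dt}(\pr\,\xi)=\pr\bigl(\nabla_{\dt}(\pr\,\xi)\bigr)=\pr\bigl((\nabla_{\dt}\pr)(\xi)\bigr)=0$, since projecting a tangential vector kills it. The only real difficulty is bookkeeping: keeping the ambient induced connection on $F^*T(M\times N)$, the normal connection $\nabla^{\perp}$, and the connections they induce on the relevant $\Hom$-bundles over $M\times[0,T)$ carefully apart, and invoking the commutation $\nabla_{\dt}\dF(\partial_k)=\nabla_{\partial_k}H$ in the right slot; once that is in place, the vanishing of the two unwanted groups of terms is immediate from the normality of $\xi$ at $x_0$.
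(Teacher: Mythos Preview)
Your proposal is correct and follows essentially the same approach as the paper: both differentiate the closed-form expression for $\pr$ in time, use $\nabla_{\dt}\dF(\partial_k)=\nabla_{\partial_k}H$, and exploit the normality of $\xi$ at $x_0$ to kill the unwanted terms, then observe that the result is tangential so its normal projection vanishes. The only cosmetic difference is that you differentiate $\pr$ as an endomorphism-valued section directly, whereas the paper differentiates $\pr(\xi)$ for an arbitrary extension of $\xi$ and checks that the $\nabla_{\dt}\xi$-terms cancel; your route avoids that bookkeeping but is otherwise the same computation.
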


\begin{proof}
Consider a local coordinate system $(x_1,\dots,x_m)$ around $x_0$ and suppose that the vectors
$\{\partial_1|_{x_0},\dots,\partial_m|_{x_0}\}$ are orthonormal. Extend them  now via parallel
transport to a frame field $\{\varepsilon_1,\dots,\varepsilon_m\}$ which is orthonormal
 with respect to the Riemannian metric $\gind(t_0)$. In order to simplify the notation we set
$e_i=\dF_{t_0}(\varepsilon_i)$, $1\le i\le m$. Extend also the vector $\xi$ arbitrarily.

Differentiating along the time direction, we get that
\begin{eqnarray*}
\left(\nabla_{\dt}\pr\right)(\xi)&=&\nabla_{\dt}\xi-\pr(\nabla_{\dt}\xi)\\
&&-\sum_{k,l=1}^m\gind^{kl}\langle\nabla_{\dt}\xi,\dF(\partial_k)\rangle\dF(\partial_l)\\
&&-\sum_{k,l=1}^m\dt\big(\gind^{kl}\big)\langle \xi,\dF(\partial_k)\rangle\dF(\partial_l)\\
&&-\sum_{k,l=1}^m\gind^{kl}\langle \xi,\nabla_{\dt}\dF(\partial_k)\rangle\dF(\partial_l)\\
&&-\sum_{k,l=1}^m\gind^{kl}\langle \xi,\dF(\partial_k)\rangle\nabla_{\dt}\dF(\partial_l).
\end{eqnarray*}
Because,
$$\dt\big(\gind^{kl}\big)=2\sum_{s,z=1}^m\gind^{ks}\gind^{zl}A_H(\partial_s,\partial_z)$$
we deduce that
\begin{eqnarray*}
\left(\nabla_{\dt}\pr\right)(v)&=&-\sum_{k,l=1}^m\gind^{kl}\langle \xi,\nabla_{\partial_k}H\rangle\dF(\partial_l)\\
&&-\sum_{k,l=1}^m\gind^{kl}\langle \xi,\dF(\partial_k)\rangle\nabla_{\partial_l}H \nonumber\\
&&-2\sum_{k,s,z,l=1}^m\gind^{ks}\gind^{zl}A_H(\partial_s,\partial_z)\langle \xi,\dF(\partial_k)\rangle\dF(\partial_l).
\end{eqnarray*}
Since, $\gind_{kl}(x_0,t_0)=\delta_{kl},$
we get that at this point it holds
\begin{eqnarray*}
\left(\nabla_{\dt}\pr\right)(\xi)&=&-\sum_{j=1}^m\langle \xi,\nabla_{e_j}H\rangle e_j
-\sum_{j=1}^m\langle \xi, e_j\rangle\nabla_{e_j}H \nonumber\\
&&-2\sum_{i,j=1}^mA_H(e_i,e_j)\langle \xi, e_i\rangle e_j\\
&=&-\sum_{j=1}^m\langle \xi,\nabla_{e_j}H\rangle e_j.
\end{eqnarray*}
Now, since $\pr\circ\pr=\pr$, we have that
$$\big(\nabla_{\dt}^{\perp}\pr\big)(\xi)=\pr\big(\nabla_{\dt}\pr(\xi)\big)-\pr\big(\nabla_{\dt}\xi\big)
=\pr\big\{\big(\nabla_{\dt}\pr\big)(\xi)\big\}=0.$$
This completes the proof of the lemma.
\end{proof}

In the next lemma we compute the evolution equation of $\sind^{\perp}$.
For that reason, it is necessary to extend $\sind^{\perp}$ on $F^{\ast}T(M\times N)$, by defining
$$\sind^{\perp}(V,W)=\sk\big(\pr(V),\pr(W)\big)$$
for any $V,W\in F^{\ast}T(M\times N)$.

\begin{lemma}
Let $\xi$ be a unit vector normal to the evolving submanifold at a fixed point $(x_0,t_0)$ in space time.
Then
\begin{eqnarray*}
\big(\nabla^{\perp}_{\dt}\sind^{\perp}-\Delta^{\perp}\sind^{\perp}\big)(\xi,\xi)
&=&2\sum_{i,j=1}^mA_{\xi}(e_i,e_j)\,\sk\big(A(e_i,e_j),\xi\big) \\
&-&2\sum_{i,j=1}^m\rk(e_i,e_j,e_i,\xi)\,\sk(e_j,\xi)\\
&-&2\sum_{i,j,k=1}^mA_{\xi}(e_i,e_j)A_{\xi}(e_i,e_k)\sind(e_j,e_k),
\end{eqnarray*}
for any orthonormal basis $\{e_1,\dots,e_m\}$ of $\dF_{t_0}(T_{x_0}M)$.
\end{lemma}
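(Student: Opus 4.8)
The plan is to compute the heat operator $\nabla^\perp_{\dt}-\Delta^\perp$ applied to the symmetric $2$-tensor $\sind^\perp$ on the normal bundle, evaluated at a fixed spacetime point $(x_0,t_0)$, using the extension $\sind^\perp(V,W)=\sk(\pr(V),\pr(W))$. The starting point is that $\sk$ is a \emph{parallel} tensor on the ambient product $M\times N$: since $\nabla^{\gk}=\pi_M^*\nabla^{\gm}\oplus\pi_N^*\nabla^{\gn}$ and $\sk=\pi_M^*\gm-\pi_N^*\gn$, we have $\nabla^{\gk}\sk=0$, and also $\partial_t$ does not see $\sk$ directly — all the time and space dependence enters only through the projection $\pr$ and through the immersion $F_t$. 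So I would write, for parallel-in-a-suitable-sense vector fields, $\sind^\perp(\xi,\xi)=\sk(\pr\xi,\pr\xi)$ and differentiate, letting all derivatives fall on the two copies of $\pr$.

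First I would set up a convenient frame: extend $\{e_i\}$ at $(x_0,t_0)$ as in the previous lemma (parallel transport of an orthonormal coordinate basis with respect to $\gind(t_0)$), and extend $\xi$ to a section of $F^*T(M\times N)$ that is parallel in space at $x_0$ and parallel in time along the flow line, so that $\nabla^\perp_{e_i}\xi$, $\nabla^\perp_{\dt}\xi$, and the spatial second derivatives $\nabla^\perp_{e_i}\nabla^\perp_{e_i}\xi$ all vanish at the point; this kills the $\xi$-derivative terms and isolates the $\pr$-derivative terms. For the time derivative I invoke Lemma \ref{projection}: $(\nabla_{\dt}\pr)(\xi)=-\sum_j\langle\xi,\nabla_{e_j}H\rangle e_j$ and $(\nabla^\perp_{\dt}\pr)(\xi)=0$. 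For the spatial side I need $(\nabla_{e_i}\pr)(\xi)$ and the Laplacian $\sum_i(\nabla_{e_i}\nabla_{e_i}\pr)(\xi)$; here the key identities are the standard ones relating the derivative of the normal projection to the second fundamental form, namely $(\nabla_{e_i}\pr)(\xi)=-\sum_j\langle\xi,\nabla_{e_i}?\rangle\dots$ — more precisely, for $\xi$ normal, $\nabla_{e_i}\xi$ has tangential part $-\sum_j A_\xi(e_i,e_j)e_j$ (Weingarten), so that $\pr(\nabla_{e_i}\xi)=\nabla^\perp_{e_i}\xi$ and the ``error'' terms $(\nabla_{e_i}\pr)(\xi)$ involve $A$. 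Iterating once more and tracing produces two types of terms: curvature terms, via the Codazzi equation $\sum_i(\nabla_{e_i}\nabla_{e_i}\dF)(\dots)$ relating $\nabla A$ to $\rk$ and $\rind$, which after contracting with $\sk$ and using that $\sk$ is parallel gives the $-2\sum\rk(e_i,e_j,e_i,\xi)\sk(e_j,\xi)$ term; and purely second-fundamental-form terms, which split into the ``good'' reaction term $2\sum A_\xi(e_i,e_j)\sk(A(e_i,e_j),\xi)$ and the ``gradient-type'' term $-2\sum A_\xi(e_i,e_j)A_\xi(e_i,e_k)\sind(e_j,e_k)$ (the latter coming from two factors of the Weingarten tangential correction hitting the two $\pr$'s and then being evaluated against $\sk$ restricted to the tangent space, which is exactly $\sind$).

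Concretely I would organize the computation as: (i) $(\nabla^\perp_{\dt}\sind^\perp)(\xi,\xi)=2\sk(\pr\xi,(\nabla^\perp_{\dt}\pr)(\xi))+(\text{terms with }\nabla_{\dt}\xi)$, and by Lemma \ref{projection} the first piece vanishes and the remaining pieces are arranged to combine with the Laplacian; (ii) $(\Delta^\perp\sind^\perp)(\xi,\xi)=\sum_i\nabla^\perp_{e_i}\nabla^\perp_{e_i}\bigl(\sk(\pr\xi,\pr\xi)\bigr)$, expanded with $\sk$ parallel so that $\nabla^\perp_{e_i}(\sk(\pr\xi,\pr\xi))=2\sk(\pr\xi,\nabla^\perp_{e_i}\pr\xi)=2\sk(\pr\xi,\nabla^\perp_{e_i}\xi)+2\sk(\pr\xi,(\nabla_{e_i}\pr)\xi)$, then differentiating once more and using the Simons-type identity for $\Delta A$ together with Codazzi to convert $\nabla^\perp_{\dt}H-\Delta^\perp H$-type expressions; (iii) subtract and collect. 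I expect the main obstacle to be the careful bookkeeping of which terms are tangential versus normal when the two derivatives land on different factors of $\pr$ — in particular verifying that the cross terms involving one $\nabla A$ and one $A$ cancel against the time-derivative contribution from Lemma \ref{projection}, so that only the three displayed terms survive. This is the analogue of the computation of the evolution of $\sind$ in \cite{savas1,savas}, but transferred to the normal bundle via the extension $\sind^\perp(V,W)=\sk(\pr V,\pr W)$, and the parallelism of $\sk$ together with Lemma \ref{projection} are precisely what make the miraculous cancellations go through.
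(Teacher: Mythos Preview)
Your approach is essentially the paper's: exploit that $\sk$ is parallel, compute the time derivative via Lemma~\ref{projection}, compute the spatial first derivative via Weingarten, differentiate again, and use Codazzi to trade $\sum_i(\nabla^\perp_{e_i}A)(e_j,e_i)$ for $\nabla^\perp_{e_j}H$ plus the curvature term. That is exactly how the paper proceeds.

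Two corrections, however. First, your step (i) misidentifies the time derivative. The quantity that appears is $2\,\sk\bigl((\nabla_{\dt}\pr)(\xi),\xi\bigr)$, not $2\,\sk\bigl((\nabla^{\perp}_{\dt}\pr)(\xi),\xi\bigr)$; by Lemma~\ref{projection} the latter vanishes but the former equals $-2\sum_{j}\langle\nabla^{\perp}_{e_j}H,\xi\rangle\,\sk(e_j,\xi)$, which is tangential--normal and generally nonzero. This is precisely the term that cancels the $\nabla^{\perp}H$ contribution produced in the Laplacian after applying Codazzi. Burying it in ``terms with $\nabla_{\dt}\xi$'' is incorrect: if you choose $\xi$ with $\nabla^{\perp}_{\dt}\xi=0$ at the point, the $\nabla_{\dt}\xi$ terms drop out of the computation of $(\nabla^{\perp}_{\dt}\sind^{\perp})(\xi,\xi)$ entirely, and the whole time derivative is carried by $(\nabla_{\dt}\pr)(\xi)$.

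Second, you do not need any Simons-type identity or the evolution equation $\nabla^{\perp}_{\dt}H-\Delta^{\perp}H$ here; those enter only in the next lemma for $\vartheta$. For $\sind^{\perp}$ the computation is one order lower: the first spatial derivative already has the closed form
\[
(\nabla^{\perp}_{e_i}\sind^{\perp})(\xi,\eta)=-\sum_j A_{\xi}(e_i,e_j)\,\sk(e_j,\eta)-\sum_j A_{\eta}(e_i,e_j)\,\sk(e_j,\xi),
\]
and one more differentiation plus Codazzi yields the three displayed terms directly after the $\nabla^{\perp}H$ cancellation.
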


\begin{proof}
Let us compute at first the time derivative of $\sind^{\perp}$. Extend $\xi$ locally to a smooth vector
field along the graph. Then, using the fact that $\sk$ and $\pr$ are parallel tensors, we get that
\begin{eqnarray*}
(\nabla^{\perp}_{\dt}\sind^{\perp})(\pr(\xi),\pr(\xi))&=&\dt\big\{\sk(\pr(\xi),\pr(\xi))\big\}\\
&&-2\sk\big(\nabla^{\perp}_{\dt}\pr(\xi),\pr(\xi)\big)\\
&=&2\sk\big(\nabla_{\dt}\pr(\xi)-\nabla^{\perp}_{\dt}\pr(\xi),\xi\big)\\
&=&2\sk\big(\nabla_{\dt}\pr(\xi)-\pr(\nabla_{\dt}\xi),\xi\big)\\
&=&2\sk\big((\nabla_{\dt}\pr)(\xi),\xi\big).
\end{eqnarray*}
By virtue of Lemma \ref{projection}, we deduce that at the point $(x_0,t_0)$, the time derivative of
$\sind^{\perp}$ is given by
$$(\nabla^{\perp}_{\dt}\sind^{\perp})(\xi,\xi)=-2\sum_{j=1}^m\langle\nabla^{\perp}_{e_j}H,\xi\rangle e_j.$$
In the next step we compute the Laplacian of $\sind^{\perp}$. As usual, consider two vectors $\xi$ and
$\eta$ on $\mathcal{N}M$ and extend them locally to smooth normal vector fields. At first let us compute the
covariant derivative of $\sind^{\perp}$ with respect to the direction $e_i$. Using the fact that $\sk$ is parallel,
we have
\begin{eqnarray*}
\big(\nabla^{\perp}_{e_i}\sind^{\perp}\big)(\xi,\eta)&=&e_i\big\{\sk(\xi,\eta)\}-\sk(\nabla^{\perp}_{e_i}\xi,\eta)-
\sk(\xi,\nabla^{\perp}_{e_i}\eta)\\
&=&\sk\big(\nabla_{e_i}\xi-\nabla^{\perp}_{e_i}\xi,\eta\big)+\sk\big(\xi,\nabla_{e_i}\eta-\nabla^{\perp}_{e_i}\eta\big).
\end{eqnarray*}
Recall from the Weingarten formulas that
$$\nabla_{e_i}\xi=-\sum_{j=1}^mA_{\xi}(e_i,e_j)e_j+\nabla^{\perp}_{e_i}\xi.$$
Hence,
$$\big(\nabla^{\perp}_{e_i}\sind^{\perp}\big)(\xi,\eta)=-A_{\xi}(e_i,e_j)\sk(e_j,\eta)-A_{\eta}(e_i,e_j)\sk(e_j,\xi).$$
Differentiating once more in the direction of $e_i$, we get
\begin{eqnarray*}
\big(\nabla^{\perp}_{e_i}\nabla^{\perp}_{e_i}\sind^{\perp}\big)(\xi,\xi)
&=&-2\sum_{j=1}^m\langle(\nabla^{\perp}_{e_i}A)(e_j,e_i),\xi\rangle \sk(e_j,\xi)\\
&&-2\sum_{j=1}^m A_{\xi}(e_i,e_j)\sk(A(e_i,e_j),\xi)\\
&&+2\sum_{j,k=1}^mA_{\xi}(e_i,e_j)A_{\xi}(e_i,e_k)\sind(e_j,e_k).
\end{eqnarray*}
From the Codazzi equation we get
$$\big(\nabla^{\perp}_{e_i}A\big)(e_j,e_i)=\big(\nabla^{\perp}_{e_j}A\big)(e_i,e_i)+\pr\big({\rk}(e_i,e_j,e_i)\big).$$
Substituting the above relation in the formula of the Hessian of $\sind$ and then taking a trace, we see that
\begin{eqnarray*}
\big(\Delta^{\perp}\sind^{\perp}\big)(\xi,\xi)
&=&-2\sum_{j=1}^m\langle(\nabla^{\perp}_{e_j}H,\xi\rangle \sk(e_j,\xi)\\
&&-2\sum_{i,j=1}^m A_{\xi}(e_i,e_j)\sk(A(e_i,e_j),\xi)\\
&&+2\sum_{i,j,k=1}^mA_{\xi}(e_i,e_j)A_{\xi}(e_i,e_k)\sind(e_j,e_k)\\
&&+2\sum_{i,j=1}^m\rk(e_i,e_j,e_i,\xi)\sk(e_j,\xi).
\end{eqnarray*}
Combining the above formula for the Laplacian with the formula for the time derivative, we deduce the evolution
equation for $\sind^{\perp}$.
\end{proof}

Consider the symmetric tensor $\vartheta\in\sym\big(F^{\ast}T(M\times N)\otimes F^{\ast}T(M\times N)\big)$, given by
$$\vartheta(V,W):=H_{\pr(V)}\cdot H_{\pr(V)},$$
where $H_{\xi}=\trace A_{\xi}$ is the component of the mean curvature vector field in the direction of the
normal vector $\xi$.

\begin{lemma}
The symmetric tensor $\vartheta$ evolves in time under the mean curvature flow according to the formula
\begin{eqnarray*}
\big(\nabla^{\perp}_{\dt}\vartheta-\Delta^{\perp}\vartheta\big)(\xi,\xi)
&=&2\sum_{i,j=1}^mA_{H}(e_i,e_j)A_{\xi}(e_i,e_j)H_{\xi}
-2\sum_{i=1}^m\langle\nabla^{\perp}_{e_i}H,\xi\rangle^{2} \\
&&-2\sum_{i=1}^{m}\rk\big(H,e_i,e_i,\xi\big)H_{\xi}
\end{eqnarray*}
for any normal vector $\xi$ in the normal bundle of the submanifold.
\end{lemma}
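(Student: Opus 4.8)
The plan is to differentiate $\vartheta$ directly, using that for a normal vector $\xi$ one has $\vartheta(\xi,\xi)=H_{\xi}^{2}=\langle H,\xi\rangle^{2}$, and then to invoke the standard evolution equation for the mean curvature vector $H$ under the flow. I would fix the point $(x_0,t_0)$ and, exactly as in the proof of Lemma~\ref{projection}, use the orthonormal frame $\{e_i=\dF(\varepsilon_i)\}$ with $\{\varepsilon_i\}$ obtained by parallel transport along the coordinate directions at $x_0$, extending the prescribed unit normal $\xi$ so that $\nabla^{\perp}_{e_i}\xi=0$ and $\nabla^{\perp}_{\dt}\xi=0$ at $(x_0,t_0)$. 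Since $H$ is a section of the normal bundle and $\pr$ restricts to the identity on $\mathcal{N}M$, applying the Leibniz rule to $\vartheta(V,W)=\langle H,\pr V\rangle\langle H,\pr W\rangle$ produces a complete cancellation of the undifferentiated $\nabla^{\perp}_{e_i}\xi$-terms and gives, near $x_0$,
$$(\nabla^{\perp}_{e_i}\vartheta)(\xi,\xi)=2\langle\nabla^{\perp}_{e_i}H,\xi\rangle\,\langle H,\xi\rangle,\qquad(\nabla^{\perp}_{\dt}\vartheta)(\xi,\xi)=2\langle\nabla^{\perp}_{\dt}H,\xi\rangle\,H_{\xi}.$$
Differentiating the first identity once more and using the parallelism of $\{e_i\}$ and $\xi$ at $(x_0,t_0)$, so that $\Delta^{\perp}=\sum_i\nabla^{\perp}_{e_i}\nabla^{\perp}_{e_i}$ there, one finds $(\Delta^{\perp}\vartheta)(\xi,\xi)=2\langle\Delta^{\perp}H,\xi\rangle\,H_{\xi}+2\sum_{i}\langle\nabla^{\perp}_{e_i}H,\xi\rangle^{2}$, whence
$$(\nabla^{\perp}_{\dt}\vartheta-\Delta^{\perp}\vartheta)(\xi,\xi)=2\big\langle(\nabla^{\perp}_{\dt}-\Delta^{\perp})H,\xi\big\rangle\,H_{\xi}-2\sum_{i=1}^m\langle\nabla^{\perp}_{e_i}H,\xi\rangle^{2}.$$

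It then remains to substitute the evolution equation for the mean curvature vector,
$$\nabla^{\perp}_{\dt}H=\Delta^{\perp}H+\sum_{i,j=1}^m A_{H}(e_i,e_j)\,\pr\big(A(e_i,e_j)\big)-\sum_{i=1}^m\pr\big(\rk(H,e_i,e_i)\big).$$
This is classical; in the present notation it follows, much in the spirit of Lemma~\ref{projection}, by differentiating $H=\gind^{kl}A(\partial_k,\partial_l)$ in time with the help of $\nabla_{\dt}\dF(\partial_k)=\nabla_{\partial_k}H$ and of the formula $\dt(\gind^{kl})=2\gind^{ks}\gind^{zl}A_H(\partial_s,\partial_z)$ recorded above, then commuting the resulting third covariant derivatives of $F$ through the Codazzi equation --- precisely the substitution $(\nabla^{\perp}_{e_i}A)(e_j,e_i)=(\nabla^{\perp}_{e_j}A)(e_i,e_i)+\pr\big(\rk(e_i,e_j,e_i)\big)$ already employed for $\sind^{\perp}$ --- and projecting onto $\mathcal{N}M$. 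Pairing this with $\xi$ yields $\langle(\nabla^{\perp}_{\dt}-\Delta^{\perp})H,\xi\rangle=\sum_{i,j}A_H(e_i,e_j)A_{\xi}(e_i,e_j)-\sum_i\rk(H,e_i,e_i,\xi)$, and inserting it into the identity above reproduces precisely the asserted evolution equation for $\vartheta$.

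The calculation is essentially bookkeeping; the only two points requiring care are the ones already met in the previous lemma. First, one must verify that the $\pr$-extension of $\vartheta$ genuinely transforms, under both $\nabla^{\perp}$ and $\nabla^{\perp}_{\dt}$, as the symmetric square of the normal $1$-form metrically dual to $H$; this rests on $H$ being normal, which forces the spurious contributions from $\nabla_{e_i}\pr$ and from the tangential (Weingarten) part of $\nabla_{e_i}\xi$ to drop out. Second, one must carry the ambient curvature term with the correct sign through Codazzi, where the antisymmetry $\rk(e_i,H,\,\cdot\,,\,\cdot\,)=-\rk(H,e_i,\,\cdot\,,\,\cdot\,)$ is what converts the Codazzi correction into $-\rk(H,e_i,e_i,\xi)$. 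Both are handled just as in the computation of the evolution equation for $\sind^{\perp}$, so no genuinely new obstacle arises; the principal input is the evolution equation for $H$ itself, which may be derived explicitly along the above lines or taken from \cite{savas1,smoczyk}.
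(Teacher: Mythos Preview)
Your proposal is correct and follows essentially the same route as the paper: compute $(\nabla^{\perp}_{\dt}\vartheta)(\xi,\xi)=2\langle\nabla^{\perp}_{\dt}H,\xi\rangle H_\xi$ and $(\Delta^{\perp}\vartheta)(\xi,\xi)=2\langle\Delta^{\perp}H,\xi\rangle H_\xi+2\sum_i\langle\nabla^{\perp}_{e_i}H,\xi\rangle^2$ via the Leibniz rule, and then substitute the evolution equation for $H$. The only difference is that the paper simply quotes the evolution equation for $H$ from \cite[Corollary 3.8]{smoczyk1}, whereas you outline its derivation through the Codazzi equation; either is acceptable.
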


\begin{proof}
At first let us compute the time derivative of $\vartheta$. Fix a point $(x_0,t_0)$ in space-time and consider
a unit normal vector $\xi$ of $\Gamma(f_{t_0})$ at the point $x_0$. Now extend $\xi$ to a local smooth vector
field.

Computing and then estimating at $(x_0,t_0)$, we get that
\begin{eqnarray*}
\big(\nabla^{\perp}_{\partial_t}\vartheta\big)(\xi,\xi)
&=&\dt\big\{\vartheta(\pr(\xi),\pr(\xi))\big\}-2\vartheta\big(\nabla^{\perp}_{\dt}\pr(\xi),\pr(\xi)\big)\\
&=&2\langle \nabla^{\perp}_{\dt}H,\pr(\xi)\rangle H_{\pr(\xi)}
+2\langle H, \nabla^{\perp}_{\dt}\pr(\xi)\rangle H_{\pr(\xi)}\\
&&-2\langle H, \nabla^{\perp}_{\dt}\pr(\xi)\rangle H_{\pr(\xi)}\\
&=&2\big\langle\nabla^{\perp}_{\dt}H,\xi\big\rangle H_{\xi}.
\end{eqnarray*}
From the evolution equation of the mean curvature vector $H$ (see \cite[Corollary 3.8]{smoczyk1}) we deduce that at
the point $x_0$ it holds
\begin{equation*}
\nabla^{\perp}_{\dt}H-\Delta^{\perp} H=\sum_{i=1}^m\pr\big(\rk(H,e_i,e_i)\big)+\sum_{i,j=1}^mA_H(e_i,e_j)A(e_i,e_j).
\end{equation*}
Combining the above two equalities, we obtain
\begin{eqnarray*}
\big(\nabla^{\perp}_{\partial_t}\vartheta\big)(\xi,\xi)&=&2\langle\Delta^{\perp} H,\xi\rangle H_{\xi} \\
&&-2\sum_{i=1}^{m}\rk\big(H,e_i,e_i,\xi\big)H_{\xi} \\
&&+2\sum_{i,j=1}^{m}A_{H}(e_i,e_j)A_{\xi}(e_i,e_j)H_{\xi}.
\end{eqnarray*}
The next step is to compute the Laplacian of the tensor $\vartheta$. At first let us compute the covariant
derivative. Fix a point $(x_0,t_0)$ in space time and let $\xi,\eta$ be two normal vector fields of $\Gamma(f_{t_0})$
defined in a neighborhood of $x_0$. Differentiating with respect to the direction $e_{i}$, we have
\begin{equation*}
\big(\nabla^{\perp}_{e_i}\vartheta\big)(\xi,\eta)=\big\langle\nabla^{\perp}_{e_i}H,\xi\big\rangle H_{\eta}
+\big\langle\nabla^{\perp}_{e_i}H,\eta\big\rangle H_{\xi}.
\end{equation*}
Differentiating once more with respect to the direction $e_i$ and summing up we deduce that
\begin{equation}
\big(\Delta^{\perp}\vartheta\big)(\xi,\xi)=2\big\langle\Delta^{\perp} H,\xi\big\rangle H_{\xi}
+2\sum_{i=1}^{m}\big\langle\nabla^{\perp}_{e_i}H,\xi\big\rangle^2.
\end{equation}
Combining the relations of the time derivative and of the Laplacian we obtain the desired evolution
equation. This completes the proof.
\end{proof}

\section{Proof of the theorem}
During this section we will always assume that $(M,\gm)$, $(N,\gn)$ and $f:M\to N$ satisfy the
assumption of the Theorem. The next lemma will be crucial to deal with the non-compactness of $N$.

\begin{lemma}
There exists a uniform positive constant $C$ such that
$$\|H\|^2(x,t)\le C,$$
for any $(x,t)\in M\times[0,T)$.
\end{lemma}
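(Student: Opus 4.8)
The plan is to bound $\|H\|^2$ by a fixed multiple of the (automatically bounded) largest eigenvalue of $-\sind^\perp$, exploiting that strict length-decrease survives the flow. By Lemma \ref{length} the inequality $\sind\ge\varepsilon\gind$ persists, hence $\sind^\perp\le-\varepsilon\gind^\perp$ on $\mathcal{N}M$; combined with $\sk\ge-\gk$ (because $\gk+\sk=2\pi_M^\ast\gm\ge0$) this gives $\varepsilon\le-\sind^\perp(\xi,\xi)\le1$ for every unit normal $\xi$ of the evolving graph. Since $\vartheta=H\otimes H$ is rank one with largest eigenvalue $\|H\|^2$, it suffices to exhibit a time-independent constant $\lambda>0$ with $\vartheta\le\lambda(-\sind^\perp)$ on $\mathcal{N}M$ for all $t$: feeding in $\xi=H/\|H\|$ then yields $\|H\|^2\le\lambda$. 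Because $M$ is compact and $f$ is smooth there is $\Lambda_0<\infty$ with $\|H\|^2\le\Lambda_0$ at $t=0$; fixing $\lambda>\Lambda_0/\varepsilon$, the symmetric bilinear form $\Theta:=\vartheta+\lambda\sind^\perp$ on $\mathcal{N}M$ is negative definite at $t=0$, and I want to show that $\Theta\le0$ is preserved.

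Adding the evolution equations of the last two lemmas gives $\nabla^\perp_{\dt}\Theta=\Delta^\perp\Theta+\mathfrak{R}$, and by Lemma \ref{projection} the projection $\pr$ is parallel in time, so this is a genuine reaction--diffusion equation for a section of the bundle of symmetric $2$-forms on $\mathcal{N}M$. I would apply the parabolic maximum principle for tensors (inserting a perturbation term $-\delta e^{Kt}g^\perp$, where $g^\perp:=\gk|_{\mathcal{N}M}$, with $K$ large and $\delta\to0$, to turn marginal inequalities into strict ones). The condition to verify is: at any point, time and unit normal $\xi$ with $\Theta\le0$ and $\Theta\xi=0$, one has $\mathfrak{R}(\xi,\xi)\le0$. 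From $\Theta\xi=0$ I read off the crucial relation
\[
H_\xi\,H=-\lambda\,\sind^\perp\xi=-\lambda\,(S\xi)^\perp,
\]
where $S$ denotes the fibrewise involution on $F^\ast T(M\times N)$ which is $+\mathrm{id}$ on $TM$ and $-\mathrm{id}$ on $TN$, so that $\sk=\langle S\cdot,\cdot\rangle$.

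Now one inspects $\mathfrak{R}(\xi,\xi)$ term by term using this relation. First, the quadratic second-fundamental-form term $2\sum_{i,j}A_H(e_i,e_j)A_\xi(e_i,e_j)H_\xi$ coming from the evolution of $\vartheta$ cancels \emph{exactly} against $2\lambda\sum_{i,j}A_\xi(e_i,e_j)\,\sk\bigl(A(e_i,e_j),\xi\bigr)$ coming from the evolution of $\sind^\perp$, because $A(e_i,e_j)\in\mathcal{N}M$ gives $\sk(A(e_i,e_j),\xi)=\langle A(e_i,e_j),(S\xi)^\perp\rangle=-\lambda^{-1}H_\xi\,A_H(e_i,e_j)$; this cancellation is the heart of the matter. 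Second, $-2\sum_i\langle\nabla^\perp_{e_i}H,\xi\rangle^2\le0$. Third, since $\sind\ge\varepsilon\gind$,
\[
-2\lambda\sum_{i,j,k}A_\xi(e_i,e_j)A_\xi(e_i,e_k)\,\sind(e_j,e_k)\ \le\ -2\lambda\varepsilon\,\|A_\xi\|^2\ \le\ 0 .
\]
What survives is the curvature remainder $-2\sum_i\rk(H,e_i,e_i,\xi)H_\xi-2\lambda\sum_{i,j}\rk(e_i,e_j,e_i,\xi)\,\sk(e_j,\xi)$; substituting $H_\xi H=-\lambda(S\xi)^\perp$ and using the symmetries of $\rk=\pi_M^\ast\rm\oplus\pi_N^\ast\rn$ together with $\sk(e_j,\xi)=\langle e_j,S\xi\rangle$, it collapses to a multiple of $\sum_{i=1}^m\rk(S\xi,e_i,e_i,\xi)$. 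Expanding $\xi$ in the singular-value frame of \S\ref{frames} and $\rk$ into its $M$- and $N$-parts, this is exactly the curvature expression controlled in the proof of Lemma \ref{length} and of the compact-target theorem in \cite{savas1} (cf.\ \cite{tsui,lee}): the hypotheses $\Ric_M\ge(m-1)\sigma\ge(m-1)\sec_N\ge-\mu$ and $\sec_M>-\sigma$ force it to be non-positive at the critical normal direction $\xi$. Hence $\mathfrak{R}(\xi,\xi)\le0$, the maximum principle applies, $\Theta\le0$ for all time, and the lemma follows with $C:=\lambda$.

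The main obstacle is this last point — the curvature computation at the critical normal direction. Everything else (the tensorial maximum principle, the initial bound from compactness of $M$, and above all the exact cancellation of the second-fundamental-form terms) is essentially formal; the content lies in showing, via the singular-value decomposition, that the curvature remainder $\sum_i\rk(S\xi,e_i,e_i,\xi)$ has the right sign, which is precisely where the calibration $\Ric_M\ge(m-1)\sigma\ge(m-1)\sec_N$ and $\sec_M>-\sigma$ of the hypotheses enters, in the same way as in \cite{savas1}.
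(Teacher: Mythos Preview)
Your setup is essentially the paper's own (your $\Theta=\vartheta+\lambda\sind^{\perp}$ is the paper's $P=\kappa\vartheta+\sind^{\perp}$ up to the harmless rescaling $\kappa=1/\lambda$), and your identification of the exact cancellation between $2\sum A_{H}A_{\xi}H_{\xi}$ and $2\lambda\sum A_{\xi}\,\sk(A,\xi)$ via $H_{\xi}H=-\lambda(S\xi)^{\perp}$ is correct and is indeed the heart of the matter. Your reduction of the curvature remainder to $2\lambda\sum_{i}\rk(S\xi,e_{i},e_{i},\xi)$ is also correct.

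The gap is in how you dispose of this remainder. You discard the good term $-2\lambda\sum A_{\xi}A_{\xi}\sind\le-2\lambda\varepsilon\|A_{\xi}\|^{2}$, keeping only $\le 0$, and then assert that the curvature hypotheses force $\sum_{i}\rk(S\xi,e_{i},e_{i},\xi)\le 0$. This is not true in general and is \emph{not} the expression controlled in Lemma~\ref{length}: that lemma deals with the tangential tensor $\sind$, whereas here $\xi$ is normal. For a concrete failure, take $\xi=\beta_{a}$ with $1\le a\le n-r$; then $S\xi=-\xi$ and $\sum_{i}\rk(S\xi,e_{i},e_{i},\xi)=-\sum_{i}\frac{\lambda_{i}^{2}}{1+\lambda_{i}^{2}}\sec_{N}(\beta_{a},\beta_{n-m+i})$, which is \emph{positive} whenever $\sec_{N}<0$ (allowed by the hypotheses, which only demand $\sec_{N}\ge-\mu/(m-1)$). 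The paper's own Remark after the lemma confirms this: the proof uses only boundedness of $\sec_{N}$ and of the singular values, not the calibration $\Ric_{M}\ge(m-1)\sigma\ge(m-1)\sec_{N}$.

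The fix is exactly the paper's argument: do not throw away the good term. Observe that at the null eigenvector $H_{\xi}^{2}=-\lambda\sind^{\perp}(\xi,\xi)\ge\lambda\varepsilon$, so the good term satisfies $-2\lambda\varepsilon\|A_{\xi}\|^{2}\le-\tfrac{2\lambda\varepsilon}{m}H_{\xi}^{2}\le-\tfrac{2\varepsilon^{2}}{m}\lambda^{2}$, whereas the curvature remainder $2\lambda\sum_{i}\rk(S\xi,e_{i},e_{i},\xi)$ is bounded by $2c\lambda$ for a constant $c$ depending only on the ambient curvature bounds and the uniform bound on the singular values from Lemma~\ref{length}. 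Hence $\mathfrak{R}(\xi,\xi)\le 2\lambda\bigl(c-\tfrac{\varepsilon^{2}}{m}\lambda\bigr)<0$ once $\lambda>cm/\varepsilon^{2}$, and the maximum principle closes. In short: the curvature term need not have a sign; it only has to be \emph{bounded}, and then the choice of $\lambda$ (equivalently, of a small $\kappa$) makes the second-fundamental-form term dominate.
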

\begin{proof}
Consider the symmetric $2$-tensor $P$, defined on the normal bundles of the evolving graphs and given by
$$P:=\kappa\,\vartheta+\sind^{\perp},$$
where $\kappa$ is a sufficiently small positive constant such that $P<0$ at time $t=0$.
We claim now that, taking if necessary a smaller choice for $\kappa$,
the tensor $P$ remains negative definite in time. Assume in contrary that this is not true. Then, there will be a first
time such that $P$ admits a unit null-eigenvector
$\eta$ at a point $(x_0,t_0)$. Note that $\eta$ is normal at the graph at the point $(x_0,t_0)$.

According to the second derivative criterion \cite{hamilton2}, we have
\begin{enumerate}[$(a)$]
\item
\quad$P(\xi,\eta)=\kappa\,\vartheta(\xi,\eta)+\sind^{\perp}(\xi,\eta)=0$,\medskip
\item
\quad$(\nabla P)(\eta,\eta)=0$,\medskip
\item
\quad$(\nabla^{\perp}_{\dt}P-\Delta^{\perp} P\big)(\eta,\eta)\ge 0,$
\end{enumerate}
for any normal vector $\xi$ of the graph at the point $(x_0,t_0)$.

Estimating at $(x_0,t_0)$ we get from $(c)$ that
\begin{eqnarray}
0&\le&-\sum_{i,j,k=1}^mA_{\eta}(e_i,e_j)A_{\eta}(e_i,e_k)\sind(e_j,e_k)-\kappa\sum_{i=1}^m\langle\nabla^{\perp}_{e_i}H,\eta\rangle^{2}\nonumber\\
&&+\sum_{i,j=1}^m\Big\{A_{\eta}(e_i,e_j)\,\sk\big(A(e_i,e_j),\eta\big)+\kappa\,A_{H}(e_i,e_j)A_{\eta}(e_i,e_j)H_{\eta}\Big\}\nonumber\\
&&-\sum_{i,j=1}^m\Big\{\rk(e_i,e_j,e_i,\eta)\,\sk(e_j,\eta)+\kappa\rk\big(H,e_i,e_i,\eta\big)H_{\eta}\Big\}.\nonumber
\end{eqnarray}
Let $\{\xi_1,\dots,\xi_n\}$ be an orthonormal basis of $\mathcal{N}_{x_0}M$. Then,
\begin{eqnarray}
0&\le&-\sum_{i,j=1}^mA_{\eta}(e_i,e_j)A_{\eta}(e_i,e_k)\sind(e_j,e_k) \nonumber\\
&&+\sum_{l=1}^n\sum_{i,j=1}^m A_{\eta}(e_i,e_j)A_{\xi_l}(e_i,e_j){\sind^{\perp}}(\xi_l,\eta) \nonumber\\
&&+\sum_{l=1}^n\sum_{i,j=1}^m\kappa H_{\xi_l}H_{\eta}A_{\xi_l}(e_i,e_j)A_{\eta}(e_i,e_j)\nonumber\\
&&-\sum_{i,j=1}^m\Big\{\rk(e_i,e_j,e_i,\eta)\,\sk(e_j,\eta)+\kappa \rk\big(H,e_i,e_i,\eta\big)H_{\eta}\Big\}\nonumber.
\end{eqnarray}
Since,  for any $l\in\{1,\dots,n\}$, from $(a)$ it holds
$$\kappa H_{\xi_l}H_{\eta}=-\sind^{\perp}(\xi_l,\eta),$$
we finally get that
\begin{eqnarray}\label{est}
0&\le&-\sum_{i,j=1}^mA_{\eta}(e_i,e_j)A_{\eta}(e_i,e_k)\sind(e_j,e_k) \\
&&-\sum_{i,j=1}^m\Big\{\rk(e_i,e_j,e_i,\eta)\,\sk(e_j,\eta)+\kappa \rk\big(H,e_i,e_i,\eta\big)H_{\eta}\Big\}.\nonumber
\end{eqnarray}
Denote by $\mathcal{A}$ the first part of (\ref{est}) whose terms are involving the second fundamental form
and by $\mathcal{B}$ the remaining curvature terms. The idea is to show that $\mathcal{A}$ becomes sufficiently negative
for small choices of $\kappa$ and dominates $\mathcal{B}$ that depends only on the singular values and the geometry
of $M$ and $N$.

{\bf Fact 1}: Since $\sind^{\perp}$ remains negative in time, from Lemma \ref{length} it follows that there exists a
universal positive constant $\varepsilon$ such that
$$\varepsilon|\xi|^2\le-\sind^{\perp}(\xi,\xi)\le |\xi|^2,$$
for any $\xi$ in $\mathcal{N}M$. Note that at $(x_0,t_0)$ it holds,
\begin{equation}\label{eps}
\kappa H^2_{\eta}=\kappa\,\vartheta(\eta,\eta)=-\sind^{\perp}(\eta,\eta)\ge\varepsilon.
\end{equation}
Therefore, as $\kappa$ becomes smaller $H^2_{\eta}$ becomes larger.

{\bf Fact 2 }: From the relations (\ref{sind}) we deduce that
$$\mathcal{A}\le-\sum_{i,j=1}^mA^2_{\eta}(e_i,e_j)\sind(e_i,e_j)
\le-\varepsilon |A^2_{\eta}|\le-\frac{\varepsilon}{m}H^2_{\eta}.$$
Thus for sufficiently small values of $\kappa$, $\mathcal{A}$ becomes sufficiently negative.

{\bf Fact 3}: Note now that the first term of $\mathcal{B}$ depends only on the geometry of $(M,\gm)$ and
$(N,\gn)$ as well as on the singular values of $f_t$ which we know are bounded. The second term of
$\mathcal{B}$  also depends only on these data, since
\begin{eqnarray*}
\kappa\,\rk(H,e_i,e_i,\eta)H_{\eta}&=&\sum_{l=1}^n\kappa\,H_{\xi_{l}}H_{\eta}\rk(\xi_l,e_i,e_i,\eta)\\
&=&-\sum_{l=1}^n\sind^{\perp}(\xi_{l},\eta)\rk(\xi_l,e_i,e_i,\eta),
\end{eqnarray*}
where $\{\xi_1,\dots,\xi_n\}$ is a local basis on the normal bundle of the graph. Therefore, there exists a
universal constant $c:=c(M,N,\varepsilon)$ such that $\mathcal{B}\le c$. Therefore, due to relation (\ref{eps})
we get that
$$\mathcal{B}\le\frac{c}{\varepsilon}\varepsilon\le\frac{c}{\varepsilon}\kappa H^2_{\eta}.$$
Thus,
$$\mathcal{A}+\mathcal{B}\le\left(\frac{c}{\varepsilon}\kappa-\frac{\varepsilon}{m}\right)H^2_{\eta}$$
Consequently, for $\kappa<\varepsilon^2/c\,m$, we see that $\mathcal{A}+\mathcal{B}<0$ which contradicts
(\ref{est}). Therefore, the norm of the mean curvature vector field remains bounded in time. This completes
the proof of the lemma.
\end{proof}

\begin{remark}
As one can see from the proof, we make use only of the facts that $M$ is compact, $N$ is complete with
bounded sectional curvatures and that all the singular values of $f_t$ are bounded from above by a positive
universal constant which is less than $1$.
\end{remark}

The proof of the Theorem will be concluded by exploiting the blow up argument of Wang \cite{wang} and White's
regularity theorem \cite{white}. Let us recall at first the following crucial estimate.
\begin{lemma}[\cite{savas1}]\label{logu}
The following estimate holds,
$$\nabla_{\dt}\hspace{-2pt}\log {\det\big\{I+(\df)^{T}\df\big\}}\le\Delta\log {\det\big\{I+(\df)^{T}\df\big\}}
-\delta\|A\|^2,$$
for some positive real number $\delta$, where here $I$ stands for the unit matrix and $(\df)^T$ for the transpose of $\df$.
\end{lemma}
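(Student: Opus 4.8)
The quantity $\log\det\{I+(\df)^{T}\df\}$ is (up to sign) twice the logarithm of the Jacobian of the projection $\pi_{M}$ restricted to the evolving graph. Concretely, assume $M$ is oriented (otherwise pass to the orientation double cover, which affects nothing in the statement) and set $\Omega:=\pi_{M}^{\ast}dV_{M}$. Since $\nabla^{\gk}=\pi_{M}^{\ast}\nabla^{\gm}\oplus\pi_{N}^{\ast}\nabla^{\gn}$ and $dV_{M}$ is parallel on $(M,\gm)$, the $m$-form $\Omega$ is parallel on $(M\times N,\gk)$. Let $\ast\Omega$ be the function on $\Gamma(f_{t})$ obtained by evaluating $\Omega$ on a positively oriented $\gind$-orthonormal tangent frame; evaluating in the singular value frame $\{e_{i}\}$, $\{\xi_{a}\}$ of Subsection \ref{frames} gives $\ast\Omega=\prod_{i=1}^{m}(1+\lambda_{i}^{2})^{-1/2}=\big(\det\{I+(\df)^{T}\df\}\big)^{-1/2}$, so $\log\det\{I+(\df)^{T}\df\}=-2\log(\ast\Omega)$. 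By Lemma \ref{length} the condition $\sind\ge\varepsilon\gind$ is preserved for a uniform $\varepsilon>0$, hence by \eqref{sind} there is a uniform $\varepsilon\in(0,1)$ with $\lambda_{i}^{2}\le 1-\varepsilon$ for all $i$ at all times; in particular the singular values stay uniformly bounded and $2^{-m/2}\le\ast\Omega\le 1$.

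\textbf{The evolution equation and reduction.} I would then invoke the evolution equation of the projection Jacobian $\ast\Omega$ under mean curvature flow (cf.\ \cite{wang}, and \cite{savas} for the present framework). Substituting the frame of Subsection \ref{frames} and using $\rk=\pi_{M}^{\ast}\rm\oplus\pi_{N}^{\ast}\rn$, and then passing to $\log(\ast\Omega)$ so that the first order terms recombine with part of the reaction term, the standard completion of squares — carried out in \cite{tsui} and \cite{lee} — yields an identity of the form
$$\nabla_{\dt}\log(\ast\Omega)-\Delta\log(\ast\Omega)=\mathcal{A}+\mathcal{B},$$
where $\mathcal{A}$ is a quadratic form in the second fundamental form coefficients $A_{\xi_{a}}(e_{i},e_{j})$ whose coefficients depend only on the $\lambda_{i}$, and $\mathcal{B}$ is the ambient curvature contribution, a function of the $\lambda_{i}$ and of $\sec_{M},\sec_{N},{\Ric}_{M}$ alone. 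Since $\log\det\{I+(\df)^{T}\df\}=-2\log(\ast\Omega)$, the lemma is equivalent to $\mathcal{A}+\mathcal{B}\ge\tfrac{\delta}{2}\|A\|^{2}$ for some uniform $\delta>0$.

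\textbf{The algebraic estimate and the main obstacle.} For $\mathcal{A}$: after the completion of squares every cross term carries a coefficient of absolute value at most $\lambda_{i}\lambda_{j}\le 1-\varepsilon$, the "pure $N$" components $A_{\xi_{a}}(e_{i},e_{j})$ with $a\le n-r$ enter only through their squares, and the remaining components combine into squares with a strictly positive margin coming from the uniform gap between the $\lambda_{i}$ and $1$; hence $\mathcal{A}\ge\delta_{1}(\varepsilon)\|A\|^{2}$ — this is where the length (indeed already area) decreasing hypothesis is used, exactly as in Tsui--Wang \cite{tsui} and Lee--Lee \cite{lee}. For $\mathcal{B}$: the sectional curvature terms of $M$ are summed over a complementary index and bounded below by ${\Ric}_{M}\ge(m-1)\sigma$; the sectional curvature terms of $N$ are bounded above using $(m-1)\sigma\ge(m-1)\sec_{N}$ together with the fact that each $\lambda_{j}^{2}<1$; and the residual terms, quartic in the $\lambda$'s and linear in $\sec_{M}$, are controlled using $\sec_{M}>-\sigma$ and $\lambda_{i}^{2}\lambda_{j}^{2}<1$. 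The curvature hypotheses of the Theorem are calibrated precisely so that these contributions add up to a nonnegative quantity, $\mathcal{B}\ge 0$; combining, $\mathcal{A}+\mathcal{B}\ge\delta_{1}(\varepsilon)\|A\|^{2}$ and the lemma follows with $\delta=2\delta_{1}(\varepsilon)$. The hard part is the sign of $\mathcal{B}$: one has to organize the index summation so that every negative curvature contribution is absorbed, which forces the Ricci lower bound of $M$ to be used in aggregate rather than sectional curvature by sectional curvature, and which genuinely requires each singular value — not merely each product $\lambda_{i}\lambda_{j}$ — to stay uniformly below $1$; this is why, under only a Ricci bound on $M$, the length decreasing hypothesis is needed. (The tangential vector fields produced by the reparametrizations keeping the flow graphical do not interfere, since $\ast\Omega$ is defined intrinsically on $\Gamma(f_{t})$.)
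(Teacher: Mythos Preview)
The paper does not prove this lemma at all: it is stated with a citation to \cite{savas1} and used as a black box. Your outline is therefore not competing with any argument in the present paper; rather, it is a sketch of the proof that \cite{savas1} actually gives, and the strategy you describe---rewriting the quantity as $-2\log(\ast\Omega)$ for the parallel $m$-form $\Omega=\pi_M^*dV_M$, computing the evolution of $\log(\ast\Omega)$ in the singular-value frame, and splitting the reaction term into a second-fundamental-form part $\mathcal{A}$ and an ambient-curvature part $\mathcal{B}$---is exactly the approach of \cite{wang,tsui,lee,savas1}. Your identification of where each curvature hypothesis enters (the Ricci lower bound on $M$ used in aggregate after summing over a complementary index, $\sec_N\le\sigma$ to control the $N$-curvature terms, $\sec_M>-\sigma$ for the residual quartic terms) is correct, as is the observation that under only a Ricci bound on $M$ one genuinely needs each $\lambda_i<1$, not just $\lambda_i\lambda_j<1$, for $\mathcal{B}\ge 0$.

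That said, what you have written is an outline, not a proof: the actual verification that $\mathcal{A}\ge\delta_1\|A\|^2$ requires writing out the quadratic form explicitly and checking its positivity coefficient by coefficient (there are several index cases depending on whether $a\le n-r$ or $a>n-r$), and the sign of $\mathcal{B}$ requires an explicit regrouping that is a few lines of honest computation. None of this is deep, but if you intend this as a self-contained proof rather than a pointer to \cite{savas1}, those computations should be written down; as it stands your text is an accurate roadmap of the cited proof rather than an independent argument.
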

Once this estimate is available one can use White's regularity theorem \cite{white}
to exclude finite time singularities as long as on finite time intervals
the graphs stay in compact regions of $M\times N$, which clearly is true, if $M\times N$ is compact. In our case $N$ is complete but we may now exploit the mean curvature estimate of the Theorem to get the desired $C^0$-estimate
for the graphs on finite time intervals. To see this, fix a point $x\in M$ and consider the curve
$\gamma:[t_0,t_1]\to M\times N$, given by
$$\gamma(x,t):=F(x,t).$$
The length $L(\gamma)$ of $\gamma$ can be estimated using the bound of the mean curvature vector as follows
\begin{eqnarray*}
L(\gamma)&=&\int_{t_0}^{t_1}\left\|\frac{\dF}{\operatorname{d}{\hspace{-2pt}}t}(x,t)\right\|\operatorname{d}{\hspace{-2pt}}t\le\int_{t_0}^{t_1}\left\|H(x,t)\right\|\operatorname{d}{\hspace{-2pt}}t\le C(t_1-t_0)\\
&\le&CT,
\end{eqnarray*}
Therefore,
$$\operatorname{dist}\left(F(x,t_0),F(x,t_1)\right)\le L(\gamma)\le CT.$$
Suppose the graphs remain in a compact region
$W$ of $M\times N$ on a finite time interval $[0,T)$. By Nash's
embedding theorem \cite{nash}
one can embed $W$ isometrically in some euclidean space
$\real{p}$ and make sure that the isometric embedding has bounded geometry.
The bounded geometry is essential in the application of White's regularity theorem \cite{white} for the mean curvature flow with controlled error terms,
which by the compactness of $W$ is applied to the mean curvature flow of
$F(M)\subset W\subset\real{p}.$
Following the same arguments developed in the papers \cite[Section 4]{wang} or
\cite[Section 3]{lee}, we can prove the long-time existence and the convergence of
the mean curvature flow to a constant map.

% Literaturliste
%%%%%%%%%%%%%%%%%%%%%%%%%%%%%%%%%%%%%%%%%%%%%%%%%%%%%%%%%%%%%%%%%%%%%%%%%
\begin{bibdiv}
\begin{biblist}

\bib{chau1}{article}{
   author={Chau, A.},
   author={Chen, J.},
   author={He, W.},
   title={Lagrangian mean curvature flow for entire Lipschitz graphs},
   journal={Calc. Var. Partial Differential Equations},
   volume={44},
   date={2012},
   %number={1-2},
   pages={199--220},
   %issn={0944-2669},
   %review={\MR{2898776}},
   %doi={10.1007/s00526-011-0431-x},
}

\bib{guth1}{article}{
   author={Guth, L.},
   title={Contraction of areas vs. topology of mappings},
   journal={arXiv:1211.1057v3},
   %volume={88},
   date={2013},
   pages={1--81},
   %issn={0003-486X},
   %review={\MR{0233295 (38 \#1617)}},
}

\bib{guth}{article}{
   author={Guth, L.},
   title={Homotopy non-trivial maps with small $k$-dilation},
   journal={arXiv:0709.1241v1},
   %volume={88},
   date={2007},
   pages={1--7},
   %issn={0003-486X},
   %review={\MR{0233295 (38 \#1617)}},
}

\bib{hamilton2}{article}{
   author={Hamilton, R.},
   title={Three-manifolds with positive Ricci curvature},
   journal={J. Differential Geom.},
   volume={17},
   date={1982},
   %number={2},
   pages={255--306},
   %issn={0022-040X},
   %review={\MR{664497 (84a:53050)}},
}

\bib{lee}{article}{
   author={Lee, K.-W.},
   author={Lee, Y.-I.},
   title={Mean curvature flow of the graphs of maps between compact
   manifolds},
   journal={Trans. Amer. Math. Soc.},
   volume={363},
   date={2011},
   %number={11},
   pages={5745--5759},
   %issn={0002-9947},
   %review={\MR{2817407}},
   %doi={10.1090/S0002-9947-2011-05204-9},
}

\bib{nash}{article}{
   author={Nash, J.},
   title={The imbedding problem for Riemannian manifolds},
   journal={Ann. of Math. (2)},
   volume={63},
   date={1956},
   pages={20--63},
   %issn={0003-486X},
   %review={\MR{0075639 (17,782b)}},
}

\bib{savas1}{article}{
   author={Savas-Halilaj, A.},
   author={Smoczyk, K.},
   title={Homotopy of area decreasing maps by mean curvature flow},
   journal={arXiv:1302.0748},
   %volume={on line first},
   date={2013},
   pages={1--18},
   %issn={0003-486X},
   %review={\MR{0233295 (38 \#1617)}},
}

\bib{savas}{article}{
   author={Savas-Halilaj, A.},
   author={Smoczyk, K.},
   title={Bernstein theorems for length and area decreasing minimal maps},
   journal={Calc. Var. Partial Differ. Equ.},
   volume={on line first},
   date={2013},
   pages={1--29},
   %issn={0003-486X},
   %review={\MR{0233295 (38 \#1617)}},
}

\bib{smoczyk1}{article}{
   author={Smoczyk, K.},
   title={Mean curvature flow in higher codimension-Introduction and survey},
   journal={Global Differential Geometry,  Springer Proceedings in Mathematics},
   volume={12},
   date={2012},
   pages={231--274},
}

\bib{smoczyk}{article}{
   author={Smoczyk, K.},
   title={Long-time existence of the Lagrangian mean curvature flow},
   journal={Calc. Var. Partial Differential Equations},
   volume={20},
   date={2004},
   %number={1},
   pages={25--46},
   %issn={0944-2669},
   %review={\MR{2047144 (2004m:53119)}},
   %doi={10.1007/s00526-003-0226-9},
}

\bib{tsui}{article}{
   author={Tsui, M.-P.},
   author={Wang, M.-T.},
   title={Mean curvature flows and isotopy of maps between spheres},
   journal={Comm. Pure Appl. Math.},
   volume={57},
   date={2004},
   %number={8},
   pages={1110--1126},
   %issn={0010-3640},
   %review={\MR{2053760 (2005b:53110)}},
   %doi={10.1002/cpa.20022},
}

\bib{wang}{article}{
   author={Wang, M.-T.},
   title={Long-time existence and convergence of graphic mean curvature flow
   in arbitrary codimension},
   journal={Invent. Math.},
   volume={148},
   date={2002},
   %number={3},
   pages={525--543},
   %issn={0020-9910},
   %review={\MR{1908059 (2003b:53073)}},
   %doi={10.1007/s002220100201},
}

\bib{white}{article}{
   author={White, B.},
   title={A local regularity theorem for mean curvature flow},
   journal={Ann. of Math. (2)},
   volume={161},
   date={2005},
   %number={3},
   pages={1487--1519},
   %issn={0003-486X},
   %review={\MR{2180405 (2006i:53100)}},
   %doi={10.4007/annals.2005.161.1487},
}

\end{biblist}
\end{bibdiv}

\end{document}